\def\N{\mathbb{N}}
\def\E{\mathbb{E}}
\def\P{\mathbb{P}}
\def\eps{\varepsilon}
\newtheorem{thm}{Theorem}
\newtheorem{lemma}[thm]{Lemma}
\newtheorem{prop}[thm]{Proposition}
\numberwithin{equation}{section}
\begin{document}

\title{Random Bernstein-Markov factors\footnote{{\bf 2010 Mathematics Subject Classification:} Primary 41A17\ \ Secondary 30E10, 60F05}}

\author{Igor Pritsker and Koushik Ramachandran}
\maketitle

\begin{abstract}
For a polynomial $P_n$ of degree $n$, Bernstein's inequality states that $\|P_n'\| \le n \|P_n\|$ for all $L^p$ norms on the unit circle, $0<p\le\infty,$
with equality for $P_n(z)= c z^n.$ We study this inequality for random polynomials, and show that the expected (average) and almost sure value of
$\Vert P_n' \Vert/\Vert P_n\Vert$ is often different from the classical deterministic upper bound $n$. In particular, for circles of radii less than one,
the ratio $\Vert P_n' \Vert/\Vert P_n\Vert$ is almost surely bounded as $n$ tends to infinity, and its expected value is uniformly bounded for all degrees
under mild assumptions on the random coefficients. For norms on the unit circle, Borwein and Lockhart mentioned that the asymptotic value of
$\Vert P_n' \Vert/\Vert P_n\Vert$ in probability is $n/\sqrt{3},$ and we strengthen this to almost sure limit for $p=2.$ If the radius $R$ of the circle
is larger than one, then the asymptotic value of $\Vert P_n' \Vert/\Vert P_n\Vert$ in probability is $n/R$, matching the sharp upper bound for
the deterministic case. We also obtain bounds for the case $p=\infty$ on the unit circle.
\end{abstract}

\section{Introduction}

\noindent Let $\mathbb{D}\subset\mathbb{C}$ denote the unit disc centered at the origin. Let $P_n(z) = \sum_{k=0}^{n}a_kz^{k}$ be a polynomial of degree $n.$  The classical Bernstein's inequality for the derivative of a polynomial states that

\begin{equation}\label{Bernstein}
\left\Vert P_n' \right\Vert_{\infty} \leq n \left\Vert P_n\right\Vert_{\infty},
\end{equation}
\noindent where $\left\Vert f\right\Vert_{\infty}$ denotes the $\sup$ norm of $f$ over $\partial\mathbb{D}$. Polynomials $P_n(z) = cz^n$ are extremal for the inequality \eqref{Bernstein}. It is also well known that inequality \eqref{Bernstein} holds when $\left\Vert \right\Vert_{\infty}$ is replaced by $L^{p}$ norms $\left\Vert \right\Vert_{p}$ taken over $\partial\mathbb{D}$ (with respect to $d\theta/(2\pi) $), for any $p>0.$ More information about Bernstein's inequality and its history can be found in \cite{RS}. Bernstein's inequality is a direct analogue of Markov's inequality for the sup norm of the derivative of a polynomial on the interval $[-1,1]$, see \cite{RS}. Since Markov's inequality is apparently the first sharp estimate for the derivative of a polynomial, the ratio of norms
$$M_n(P_n) :=\dfrac{\left\Vert P_n' \right\Vert_{\infty}}{\left\Vert P_n \right\Vert_{\infty}}$$
is often referred to as the Markov factor of $P_n,$ while in our case the Bernstein-Markov factor is a more appropriate term. If $P_n(z)\equiv 0$ then we set $M_n(P_n) := 0$, which holds for any constant polynomial, and this convention is carried through to all other norms used in our paper.

\vspace{0.1in}

In this article, we consider random polynomials of the form
$$P_n(z) = \sum_{k=0}^{n}A_kz^{k},$$
where the $\{A_k\}$ i.i.d. complex random variables. Throughout this article we will assume that our random variables are non trivial, meaning that $\P(A_k=0)<1.$ Our goal is to study the asymptotics of the \emph{average} Bernstein-Markov factors, namely
$$\mathbb{E}[M_n(P_n)] = \mathbb{E}\left[\dfrac{\|P_n'\|}{\|P_n\|}\right]$$
for various norms, as well as probabilistic limits of these factors. Define the $L^p$ norm on the unit circle $\partial\mathbb{D}$ by
\[
\|P_n\|_p = \left( \frac{1}{2\pi} \int_0^{2\pi} |P_n(e^{it})|^p\,dt \right)^{1/p},\quad p>0.
\]

\noindent The Bernstein-Markov factors of random polynomials for the $L^p$ norms on the unit circle were considered by Borwein and Lockhart,  who stated an asymptotic for the case of random Littlewood polynomials. But Theorems $1$ and $2$ in \cite{BL} give the following more general result.

\begin{thm} \label{bl}
Let $0 < p < \infty.$ Consider random polynomials $P_n(z) = \sum_{k=0}^{n}A_kz^k$ where the $A_k$ are i.i.d. real valued random variables with mean $0$, variance $1$ and additionally if $p >2,$ $\mathbb{E}\left(|A_0|^{2p} \right) < \infty.$ Then

$$\dfrac{1}{n}\dfrac{\left\Vert P_n'\right\Vert_p}{\left\Vert P_n\right\Vert_p}\xrightarrow{\textbf{P}} \dfrac{1}{\sqrt{3}} \quad\mbox{as } n\to\infty,$$
where the convergence holds in probability. As a consequence, the average Bernstein-Markov factors satisfy
\begin{align*}
\lim_{n\to\infty}\dfrac{1}{n}\mathbb{E}\left[\dfrac{\left\Vert P_n'\right\Vert_p}{\left\Vert P_n\right\Vert_p}\right]=\dfrac{1}{\sqrt{3}}.
\end{align*}
\end{thm}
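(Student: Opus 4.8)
The plan is to read the result off from two norm asymptotics of Borwein and Lockhart and then to upgrade the probabilistic limit to a limit of expectations by invoking Bernstein's inequality itself.

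First I would recall what Theorems $1$ and $2$ of \cite{BL} give when applied to $P_n$ and to its derivative: under the stated moment hypotheses there is a constant $\gamma_p>0$ (one expects $\gamma_p=\Gamma(1+p/2)^{1/p}$) with
\[
n^{-1/2}\left\Vert P_n\right\Vert_p \xrightarrow{\textbf{P}} \gamma_p
\qquad\text{and}\qquad
n^{-3/2}\left\Vert P_n'\right\Vert_p \xrightarrow{\textbf{P}} \frac{\gamma_p}{\sqrt 3}.
\]
The heuristic behind both limits is one and the same central limit computation. For real i.i.d.\ coefficients of mean $0$ and variance $1$, $n^{-1/2}P_n(e^{it})$ is asymptotically a standard complex Gaussian (independent real and imaginary parts, each $N(0,\tfrac12)$), so $n^{-1}|P_n(e^{it})|^2$ is asymptotically a standard exponential $\xi$ and
\[
\frac{1}{2\pi}\int_0^{2\pi} n^{-p/2}\left|P_n(e^{it})\right|^p\,dt \longrightarrow \mathbb{E}\left[\xi^{p/2}\right] = \Gamma(1+p/2);
\]
for $P_n'(e^{it}) = \sum_{k=1}^n kA_k e^{i(k-1)t}$ the variance of the real part is $\sum_{k=1}^n k^2\cos^2((k-1)t)\sim n^3/6$, which replaces the factor $\tfrac12$ above by $\tfrac16$ and hence multiplies the limiting normalized integral by $3^{-p/2}$, producing the $\sqrt 3$. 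The moment condition $\mathbb{E}|A_0|^{2p}<\infty$ when $p>2$ is precisely what makes the interchange of limit and integral legitimate for the larger exponents. A point to flag: $P_n'$ has \emph{independent but not identically distributed} coefficients $(k+1)A_{k+1}$, so the version of the \cite{BL} argument one needs for $\left\Vert P_n'\right\Vert_p$ is the one permitting deterministic weights on the coefficients.

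Second, since both normalized norms converge in probability to positive constants, the continuous mapping theorem applied to $(x,y)\mapsto x/y$ gives
\[
\frac{1}{n}\,\frac{\left\Vert P_n'\right\Vert_p}{\left\Vert P_n\right\Vert_p}
= \frac{n^{-3/2}\left\Vert P_n'\right\Vert_p}{n^{-1/2}\left\Vert P_n\right\Vert_p}
\xrightarrow{\textbf{P}} \frac{\gamma_p/\sqrt 3}{\gamma_p} = \frac{1}{\sqrt 3},
\]
which is the first assertion.

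Finally, for the statement about expectations I would use Bernstein's inequality \eqref{Bernstein} in its $L^p$ form, $\left\Vert P_n'\right\Vert_p\le n\left\Vert P_n\right\Vert_p$, which forces $0\le \tfrac1n\left\Vert P_n'\right\Vert_p/\left\Vert P_n\right\Vert_p\le 1$ for every $n$ and every realization of the coefficients (with the convention that this ratio is $0$ when $P_n\equiv 0$). A sequence of $[0,1]$-valued random variables that converges in probability to the constant $1/\sqrt 3$ converges to it in $L^1$ as well — split the expectation of the absolute deviation over the event $\{\,|\tfrac1n\|P_n'\|_p/\|P_n\|_p-\tfrac1{\sqrt3}|>\eps\,\}$ and its complement and let $\eps\downarrow 0$ — whence $\tfrac1n\,\mathbb{E}\left[\left\Vert P_n'\right\Vert_p/\left\Vert P_n\right\Vert_p\right]\to 1/\sqrt 3$. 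The only substantive step is the first, and the difficulty there is not the limit law itself, which is \cite{BL}, but arranging it in exactly the form required: convergence in probability (not merely of expectations) of both normalized norms, and the weighted-coefficient version for $P_n'$; once these are in hand the ratio and the expectation follow without further estimates.
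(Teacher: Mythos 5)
Your proposal is correct and follows essentially the same route as the paper: cite Theorems 1 and 2 of \cite{BL} for the limits in probability of $n^{-1/2}\Vert P_n\Vert_p$ and $n^{-3/2}\Vert P_n'\Vert_p$, take the ratio, and then pass to expectations using the uniform bound $\Vert P_n'\Vert_p\le n\Vert P_n\Vert_p$ together with bounded/dominated convergence. The only difference is that you spell out the boundedness justification and the weighted-coefficient caveat for $P_n'$, which the paper leaves implicit.
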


\noindent We first show that when $p=2$, the convergence of Bernstein-Markov factors holds in the stronger almost sure sense.

\vspace{0.1in}

\begin{thm}\label{1}
If $\{A_k\}$ are i.i.d. complex valued random variables with  $\E[|A_0|^2]<\infty$, then
$$\lim_{n\to\infty}\dfrac{1}{n}\dfrac{\left\Vert P_n'\right\Vert_2}{\left\Vert P_n\right\Vert_2}=\dfrac{1}{\sqrt{3}}\quad \mbox{a.s.}$$
Hence the average Bernstein-Markov factors satisfy
\begin{align*}
\lim_{n\to\infty}\dfrac{1}{n}\mathbb{E}\left[\dfrac{\left\Vert P_n'\right\Vert_2}{\left\Vert P_n\right\Vert_2}\right]=\dfrac{1}{\sqrt{3}}.
\end{align*}
\end{thm}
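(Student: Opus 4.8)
The plan is to exploit the Parseval identity, which turns the $L^2$ statement into a question about weighted sums of i.i.d.\ nonnegative random variables. Writing $P_n(z)=\sum_{k=0}^n A_k z^k$, orthonormality of $\{e^{ikt}\}$ on $\partial\mathbb{D}$ gives
$$\|P_n\|_2^2=\sum_{k=0}^n|A_k|^2,\qquad \|P_n'\|_2^2=\sum_{k=1}^n k^2|A_k|^2,$$
so that, setting $X_k:=|A_k|^2$,
$$\frac{1}{n^2}\,\frac{\|P_n'\|_2^2}{\|P_n\|_2^2}=\frac{\tfrac1n\sum_{k=1}^n (k/n)^2X_k}{\tfrac1n\sum_{k=0}^n X_k}.$$
The $X_k$ are i.i.d.\ and nonnegative with $\mu:=\mathbb{E}[X_0]=\mathbb{E}[|A_0|^2]\in(0,\infty)$; positivity of $\mu$ uses the standing nontriviality assumption $\P(A_0=0)<1$. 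By Kolmogorov's strong law the denominator tends to $\mu$ a.s., so everything reduces to a weighted law for the numerator.

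First I would prove $\tfrac1n\sum_{k=1}^n (k/n)^2X_k\to \mu\int_0^1 t^2\,dt=\mu/3$ a.s. Put $S_0:=0$, $S_m:=\sum_{k=1}^m X_k$, so $S_m/m\to\mu$ a.s. Abel summation with $a_k=(k/n)^2$ gives
$$\sum_{k=1}^n\Big(\frac{k}{n}\Big)^2X_k=S_n-\sum_{k=1}^{n-1}\frac{2k+1}{n^2}\,S_k,$$
hence
$$\frac1n\sum_{k=1}^n\Big(\frac{k}{n}\Big)^2X_k=\frac{S_n}{n}-\frac{1}{n^3}\sum_{k=1}^{n-1}(2k+1)\,k\cdot\frac{S_k}{k}.$$
The first term tends to $\mu$. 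For the second, the nonnegative weights $w_k:=(2k+1)k$ satisfy $\sum_{k=1}^{n-1}w_k\sim\tfrac23 n^3$ and $w_k/\sum_{j<n}w_j\to0$ for each fixed $k$, so from $S_k/k\to\mu$ the Toeplitz (Cesàro-type) lemma yields $\frac{1}{n^3}\sum_{k=1}^{n-1}w_k(S_k/k)\to\tfrac23\mu$. Therefore the weighted sum converges to $\mu-\tfrac23\mu=\tfrac\mu3$ a.s., and dividing the two limits gives $\tfrac{1}{n^2}\|P_n'\|_2^2/\|P_n\|_2^2\to\tfrac13$ a.s. (note $\|P_n\|_2>0$ for all large $n$ a.s., since $S_n\to\infty$); taking square roots gives the claimed almost sure limit $1/\sqrt3$.

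For the statement on expectations I would invoke Bernstein's inequality in the $L^2$ norm: $\|P_n'\|_2\le n\|P_n\|_2$ deterministically, so $0\le \tfrac1n\|P_n'\|_2/\|P_n\|_2\le 1$ for every $n$ and every realization (and equals $0$ under the stated convention when $P_n\equiv 0$). The sequence $\tfrac1n\|P_n'\|_2/\|P_n\|_2$ is thus uniformly bounded, and the bounded convergence theorem upgrades the a.s.\ limit to $\tfrac1n\,\mathbb{E}\big[\|P_n'\|_2/\|P_n\|_2\big]\to 1/\sqrt3$.

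The main obstacle is the weighted strong law; the rest is bookkeeping. The delicate point there is that one is only assuming a first moment on $X_k=|A_0|^2$ (equivalently a second moment on $A_0$), so variance-based estimates are not available directly for the weighted sum $\sum (k/n)^2X_k$; routing through the ordinary SLLN for $S_m$ together with Abel summation, rather than trying to apply a weighted SLLN to $(k/n)^2X_k$ head-on, is what keeps the hypotheses minimal. One should also check that the exceptional null set is independent of $n$, which it is: it is just the single event $\{S_m/m\not\to\mu\}$.
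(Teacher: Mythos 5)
Your proof is correct. The skeleton coincides with the paper's: Parseval reduces everything to the ratio $\bigl(\sum_{k=1}^n k^2|A_k|^2\bigr)/\bigl(\sum_{k=0}^n|A_k|^2\bigr)$, the ordinary strong law handles the denominator, and the deterministic bound $\|P_n'\|_2\le n\|P_n\|_2$ plus bounded convergence upgrades the a.s.\ limit to convergence of expectations. The one substantive step --- the almost sure convergence of $n^{-3}\sum_{k=1}^n k^2|A_k|^2$ to $\mu/3$ under only a first moment on $|A_k|^2$ --- is where you diverge: the paper invokes a weighted strong law of large numbers quoted from Chow and Teicher (their Lemma \ref{LLN}, with $a_k=k^2$ and the verification that $b_n^2\sum_{k\ge n}b_k^{-2}=\mathcal{O}(n)$), whereas you derive the same conclusion from scratch by Abel summation, writing $\sum_{k=1}^n(k/n)^2X_k=S_n-\sum_{k=1}^{n-1}\frac{2k+1}{n^2}S_k$ and then applying the Toeplitz lemma to the weights $(2k+1)k$ acting on the convergent sequence $S_k/k$. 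Your identity and the asymptotics $\sum_{k<n}(2k+1)k\sim\frac23 n^3$ check out, the Toeplitz conditions hold, and the exceptional null set is indeed just $\{S_m/m\not\to\mu\}$, so the argument is complete. What your route buys is self-containment: it needs only Kolmogorov's SLLN and a summability lemma rather than an external weighted SLLN, and it makes transparent why the answer is $\int_0^1 t^2\,dt=1/3$. It is also worth noting that the Abel summation device you use here is exactly the mechanism the paper deploys later in its proof of Theorem \ref{3} for circles of radius $R>1$, so your argument unifies the two proofs more than the paper's own presentation does. The only points worth stating explicitly in a final write-up are the two you already flagged: $\mu>0$ by nontriviality of $A_0$ (needed to divide the limits), and $\|P_n\|_2>0$ eventually a.s.
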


\medskip We next consider the average Bernstein-Markov factors for the $L^p$ norm on $\partial\mathbb{D}_r = \{z: |z| = r\},$ for $ r>0.$ This norm is defined by
\[
\|P_n\|_{p,r} = \left( \frac{1}{2\pi} \int_0^{2\pi} |P_n(re^{it})|^p\,dt \right)^{1/p},\quad p>0.
\]

\noindent Using a simple change of variable $z\to rz$, we obtain from the Bernstein inequality on the unit circle that
\begin{equation*}
\left\Vert P_n' \right\Vert_{p,r} \leq \frac{n}{r} \left\Vert P_n\right\Vert_{p,r}, \quad 0<p\le\infty,
\end{equation*}
where equality holds for polynomials of the form $P_n(z)=cz^n.$ However, we show that in the randomized model of this problem, the Bernstein-Markov factors remain bounded as $n\to\infty$ with probability one, for all $r\in(0,1).$ At a heuristic level, this is because for $z$ such that $|z| = r$, we can view $P_n(z)$ as a partial sum of a random power series $F(z).$ Hence one expects the limit of $M_n$ to be the corresponding ratio of norms for $F'$ and $F.$
\begin{thm}\label{2}
Suppose that $0<p\le\infty$ and $r\in (0,1)$. If $\{A_k\}$ are i.i.d. complex random variables with $\E[\log^+|A_0|]<\infty$,
then there exists a positive random variable $X_{p,r}$  such that
$$0<\lim_{n\to\infty}\dfrac{\left\Vert P_n'\right\Vert_{p, r}}{\left\Vert P_n\right\Vert_{p, r}}=X_{p,r} < \infty \quad\mbox{a.s.}$$
If $\E[|A_0|]<\infty,$ and for some $\eps>0$ the distribution of $|A_0|$ is absolutely continuous on $[0,\eps]$, with the density bounded on $[0,\eps]$, then
$$\sup_{n\in\N} \mathbb{E}\left[\dfrac{\left\Vert P_n'\right\Vert_{p, r}}{\left\Vert P_n\right\Vert_{p, r}}\right] < \infty.$$
\end{thm}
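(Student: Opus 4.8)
The first statement is the rigorous version of the heuristic preceding the theorem. Assuming only $\E[\log^+|A_0|]<\infty$, the plan is to note (via both halves of the Borel--Cantelli lemma, the second using $\P(A_0=0)<1$) that $\limsup_{k\to\infty}|A_k|^{1/k}=1$ a.s., so that $F(z):=\sum_{k=0}^\infty A_kz^k$ has radius of convergence exactly $1$ a.s. Fixing $r\in(0,1)$ and working on this event, $P_n\to F$ and $P_n'\to F'$ uniformly on $\overline{\mathbb{D}_r}$, so $\|P_n\|_{p,r}\to\|F\|_{p,r}$ and $\|P_n'\|_{p,r}\to\|F'\|_{p,r}$ for every $0<p\le\infty$ (for $p<1$ one passes through $\bigl||a|^p-|b|^p\bigr|\le|a-b|^p$ to transfer uniform convergence to convergence of the $p$-``norm''). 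Since $F\not\equiv0$ and $F'\not\equiv0$ a.s.\ (both $\{A_k=0\ \forall k\}$ and $\{A_k=0\ \forall k\ge1\}$ have probability $\lim_N\P(A_0=0)^{N}=0$), the quantities $\|F\|_{p,r}$ and $\|F'\|_{p,r}$ are a.s.\ finite and strictly positive, and one sets $X_{p,r}:=\|F'\|_{p,r}/\|F\|_{p,r}$.

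For the second statement the plan is to bound $\E[\|P_n'\|_{p,r}/\|P_n\|_{p,r}]$ directly and uniformly in $n$. Since $dt/(2\pi)$ is a probability measure, the numerator satisfies $\|P_n'\|_{p,r}\le\|P_n'\|_{\infty,r}\le\sum_{k=1}^n k|A_k|r^{k-1}$. The crucial step, which I expect to be the main obstacle (especially for $0<p<1$), is a lower bound for the denominator. First I would prove a ``peeling'' inequality: for $g$ analytic near $\overline{\mathbb{D}_r}$ with $g(z)=g(0)+z\,\widetilde g(z)$,
\[
\|g\|_{p,r}\ \ge\ \gamma_p\,r\,\|\widetilde g\|_{p,r},\qquad \gamma_p:=2^{-1-1/p}\le\tfrac12 ,
\]
proved by splitting $\partial\mathbb{D}_r$ into the set where $r|\widetilde g|\ge2|g(0)|$ (there $|g|\ge\tfrac r2|\widetilde g|$ by the reverse triangle inequality) and its complement (there $|\widetilde g|<2|g(0)|/r$), and using the subharmonicity bound $|g(0)|\le\|g\|_{p,r}$ (valid for every $p>0$ by Jensen's inequality for $\log$). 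Applying this to $R_{n-j}(z):=\sum_{k=j}^n A_kz^{k-j}=A_j+z\,R_{n-j-1}(z)$ and iterating gives $\|P_n\|_{p,r}\ge(\gamma_p r)^j|A_j|$ for all $0\le j\le n$, hence $\|P_n\|_{p,r}\ge\max_{0\le j\le n}\beta^j|A_j|$ with $\beta:=\gamma_p r\in(0,1)$. (The absolute-continuity hypothesis forces $\P(|A_0|=0)=0$, so $|A_0|>0$ a.s., which is used below.)

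Combining the two bounds and using $\max_j\beta^j|A_j|\ge\beta^k|A_k|$ together with $\max_j\beta^j|A_j|\ge|A_0|$,
\[
\frac{\|P_n'\|_{p,r}}{\|P_n\|_{p,r}}\ \le\ \sum_{k=1}^n kr^{k-1}\,\frac{|A_k|}{\max_j\beta^j|A_j|}\ \le\ \sum_{k=1}^n kr^{k-1}\min\!\Bigl(\beta^{-k},\ |A_k|/|A_0|\Bigr).
\]
For any fixed $\theta\in(0,1)$ one has $\min(a,b)\le a^{1-\theta}b^\theta$, so $\min(\beta^{-k},|A_k|/|A_0|)\le\beta^{-k(1-\theta)}|A_k|^\theta|A_0|^{-\theta}$; taking expectations and using independence of $A_k$ and $A_0$,
\[
\E\!\left[\frac{\|P_n'\|_{p,r}}{\|P_n\|_{p,r}}\right]\ \le\ \E\bigl[|A_0|^\theta\bigr]\,\E\bigl[|A_0|^{-\theta}\bigr]\,\sum_{k=1}^n kr^{k-1}\beta^{-k(1-\theta)} .
\]
Here $\E[|A_0|^\theta]\le1+\E[|A_0|]<\infty$, while the bounded-density hypothesis gives $\E[|A_0|^{-\theta}]\le M\eps^{1-\theta}/(1-\theta)+\eps^{-\theta}<\infty$. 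Finally $r^{k-1}\beta^{-k(1-\theta)}=r^{-1}\rho^k$ with $\rho:=r^\theta\gamma_p^{-(1-\theta)}$, and $\rho\to r<1$ as $\theta\to1^-$; choosing $\theta=\theta(p,r)$ close enough to $1$ makes $\rho<1$, so the series is summable with a bound independent of $n$. The only delicate point is the peeling inequality — for $p<1$ the naive bound $\|P_n\|_{p,r}\ge|A_k|r^k$, available when $p\ge1$ from the Fourier coefficients of an $L^1$ function, is false — while everything else is a routine computation.
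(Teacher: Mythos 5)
Your proof is correct, and while the first half coincides with the paper's argument (radius of convergence $\ge 1$ from $\E[\log^+|A_0|]<\infty$, uniform convergence of $P_n\to F$ and $P_n'\to F'$ on $|z|=r$, and $F,F'\not\equiv 0$ a.s.), the second half takes a genuinely different route at the one place where a real idea is needed: the lower bound for $\|P_n\|_{p,r}$ valid for all $p>0$. The paper gets $\|P_n\|_{p,r}\ge c\, r\,(|A_0|+|A_1|)$ by citing the trivial Fourier-coefficient bound for $p\ge 1$ and the Hardy--Littlewood coefficient theorem for $H^p$, $0<p<1$; it then only has to control $\E\bigl[(|A_0|+|A_1|)^{-1}\bigr]$, which it does via the convolution $f\ast f$ of the density. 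Your peeling inequality $\|g\|_{p,r}\ge \gamma_p r\,\|\widetilde g\|_{p,r}$ (whose proof by splitting $\partial\mathbb{D}_r$ according to whether $r|\widetilde g|\ge 2|g(0)|$, together with $|g(0)|\le\|g\|_{p,r}$, checks out for all $0<p\le\infty$) is elementary and self-contained, but after iteration it yields only the geometrically weaker bound $\|P_n\|_{p,r}\ge(\gamma_p r)^j|A_j|$; you compensate with the interpolation $\min(\beta^{-k},|A_k|/|A_0|)\le\beta^{-k(1-\theta)}|A_k|^\theta|A_0|^{-\theta}$ and the choice of $\theta$ close to $1$ so that $r^\theta\gamma_p^{-(1-\theta)}<1$, which reduces the hypothesis to $\E[|A_0|^{-\theta}]<\infty$ --- a condition the bounded density on $[0,\eps]$ certainly supplies. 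The trade-off is clear: the paper's route is shorter but leans on a nontrivial $H^p$ coefficient inequality, whereas yours avoids all citations at the cost of the extra $\theta$-bookkeeping, and it only ever inverts a single coefficient $|A_0|$ rather than the sum $|A_0|+|A_1|$. Both arguments are sound; one tiny remark is that your claim $\limsup_k|A_k|^{1/k}=1$ a.s.\ (rather than just $\le 1$, which is all that is needed) does require the second Borel--Cantelli step you mention, but nothing downstream depends on it.
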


The second statement above admits a stronger result  when $p=2$. This we collect as a proposition.

\begin{prop}\label{P1}
Let $r\in(0, 1).$ Let $\{A_k\}$ be i.i.d. complex valued random variables with $\mathbb{E}(|A_0|^2)<\infty.$ Then there exists a positive random variable $X_{2,r}$ such that
$$0<\lim_{n\to\infty}\dfrac{\left\Vert P_n'\right\Vert_{2, r}}{\left\Vert P_n\right\Vert_{2, r}}=X_{2,r} < \infty \quad\mbox{a.s.}$$
Furthermore,
$$\lim_{n\to\infty}\mathbb{E}\left[\dfrac{\left\Vert P_n'\right\Vert_{2, r}}{\left\Vert P_n\right\Vert_{2, r}} \right]=\mathbb{E}[X_{2,r}] < \infty.$$
\end{prop}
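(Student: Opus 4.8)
The plan is to combine almost sure convergence with a domination (uniform integrability) argument, exploiting the explicit $L^2$ formula. By Parseval's identity on $\partial\mathbb{D}_r$,
\[
\|P_n\|_{2,r}^2=\sum_{k=0}^n|A_k|^2r^{2k},\qquad \|P_n'\|_{2,r}^2=\sum_{k=1}^n k^2|A_k|^2r^{2(k-1)},
\]
so, writing $M_n:=\|P_n'\|_{2,r}/\|P_n\|_{2,r}$ and $w_k:=|A_k|^2r^{2k}$,
\[
M_n^2=\frac1{r^2}\,\frac{\sum_{k=1}^n k^2 w_k}{\sum_{k=0}^n w_k}.
\]
Since $r<1$ and $\sigma^2:=\E[|A_0|^2]<\infty$, the nonnegative series $\sum_{k\ge0}w_k$ and $\sum_{k\ge1}k^2w_k$ have finite expectation, hence are finite a.s.; they are positive a.s.\ because $\P(A_0=0)<1$ forces $A_k\neq0$ for infinitely many $k$ a.s. Therefore $M_n^2\to X_{2,r}^2:=\frac1{r^2}\sum_{k\ge1}k^2w_k\big/\sum_{k\ge0}w_k$ a.s., with $0<X_{2,r}<\infty$ a.s.; this is the first assertion (alternatively it is the $p=2$ case of Theorem~\ref{2}).

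For the convergence of expectations I would produce a single square-integrable random variable dominating every $M_n$. Since $\sum_{j=0}^n w_j\ge\sum_{j=0}^k w_j$ whenever $n\ge k$, a term-by-term comparison gives
\[
M_n^2=\frac1{r^2}\sum_{k=1}^n\frac{k^2w_k}{\sum_{j=0}^n w_j}\ \le\ \frac1{r^2}\sum_{k=1}^\infty\frac{k^2w_k}{\sum_{j=0}^k w_j}=:Y^2 ,
\]
so $M_n\le Y$ for all $n$, and the proof reduces to $\E[Y^2]<\infty$. Put $T_k:=\sum_{j=0}^{k-1}w_j$, which is independent of $w_k$; then $\E[Y^2]=\frac1{r^2}\sum_{k\ge1}k^2\,\E\big[w_k/(w_k+T_k)\big]$, and conditioning on $T_k$ together with Markov's inequality gives $\E\big[w_k/(w_k+T_k)\big]\le\E\big[\min(1,\sigma^2r^{2k}/T_k)\big]$, which equals $\P(T_k\le\sigma^2r^{2k})+\sigma^2r^{2k}\,\E\big[T_k^{-1}\mathbf{1}_{\{T_k>\sigma^2r^{2k}\}}\big]$.

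Everything now hinges on a lower-tail bound for $T_k$. Since $\P(A_0=0)<1$, there are $\eta\in(0,1)$ and $x_0>0$ with $\P(|A_0|\le x)\le1-\eta$ for $0<x\le x_0$, and by independence of the events $\{w_j\le\eps\}=\{|A_j|\le\sqrt\eps\,r^{-j}\}$,
\[
\P(T_k\le\eps)\ \le\ \prod_{j=0}^{k-1}\P\big(|A_0|\le\sqrt\eps\,r^{-j}\big)\ \le\ (1-\eta)^{\,m},
\]
where $m$ counts the indices $j\le k-1$ with $\sqrt\eps\,r^{-j}\le x_0$; for small $\eps$ this $m$ is of order $\log(1/\eps)$, which turns the bound into $C\max(\eps,r^{2k})^{\beta}$ with $\beta:=\dfrac{\log\frac1{1-\eta}}{2\log\frac1r}>0$ (note $r^{2\beta}=1-\eta$, so at $\eps$ of order $r^{2k}$ the bound is of order $(1-\eta)^k$). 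Feeding this into the two terms above — directly into the first, and after integration of $\int_{\sigma^2r^{2k}}^{\infty}\P(T_k<u)\,u^{-2}\,du$ into the second — yields $\E\big[w_k/(w_k+T_k)\big]\le C'\nu^{k}$ for some $\nu<1$, hence $\E[Y^2]\le C'r^{-2}\sum_{k\ge1}k^2\nu^{k}<\infty$. With $Y\in L^2$ in hand, dominated convergence gives $\E[M_n]\to\E[X_{2,r}]$ and $\E[X_{2,r}]\le\E[Y]<\infty$, which is the remaining assertion. (Equivalently, $\sup_n\E[M_n^2]\le\E[Y^2]<\infty$ makes $\{M_n\}$ uniformly integrable, so the a.s.\ limit is an $L^1$ limit.)

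The step I expect to be the real obstacle is the lower-tail estimate for $T_k$. The difficulty is that $\E[|A_0|^2]<\infty$ allows $|A_0|$ to place as much mass as one likes arbitrarily close to $0$, so quantities like $\E[1/|A_0|^2]$ — and therefore any crude domination keeping only one coefficient in the denominator — may diverge. One must instead use that $T_k$ is a sum of $k$ independent nonnegative summands, so that $\{T_k\le\eps\}$ demands of order $\log(1/\eps)$ independent "small'' events and hence has probability decaying like a power of $\eps$, with exponent depending only on how much mass $|A_0|$ keeps away from $0$ (through $\eta$) and on $r$.
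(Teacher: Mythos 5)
Your proof is correct, and while it rests on the same explicit $L^2$ formulas, the route is genuinely different from the paper's in both halves. For the almost sure convergence, the paper observes that $M_n=\Vert P_n'\Vert_{2,r}/\Vert P_n\Vert_{2,r}$ is monotone increasing in $n$ and lets monotone convergence do the work for the expectations as well; you instead note that numerator and denominator series each converge a.s.\ (finite expectation) to positive limits, and then pass to expectations by dominated convergence with the explicit dominating variable $Y^2=r^{-2}\sum_k k^2w_k/\sum_{j\le k}w_j$, obtained by shrinking the denominator term by term. For the crucial integrability, the paper applies Cauchy--Schwarz to decouple numerator and denominator and reduces to $\E[1/Z]<\infty$ for $Z=\sum_{k\ge0}|A_k|^2r^{2k}$; its tail bound $\P(Z<\alpha r^{2n})\le c^{n+1}$ only beats the weight $1/(\alpha r^{2n+2})$ when $r>\sqrt{c}$, so the paper must then patch in the remaining radii via monotonicity of $r\mapsto rX_{2,r}$. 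Your lower-tail estimate $\P(T_k\le\eps)\le C\max(\eps,r^{2k})^{\beta}$ is a sharpened version of the same ``many independent small coefficients'' idea --- you count how many of the factors $\P(|A_j|\le\sqrt{\eps}\,r^{-j})$ are genuinely bounded away from $1$ rather than bounding each by a fixed constant --- and this power-law decay in $\eps$ makes the argument work for all $r\in(0,1)$ in one pass, with no extension step. As a bonus your method yields the stronger conclusion $\sup_n\E[M_n^2]\le\E[Y^2]<\infty$, hence uniform integrability, whereas the paper only controls first moments. Both arguments are complete; yours is somewhat longer but more uniform in $r$ and quantitatively stronger.
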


\medskip
While the Bernstein-Markov factors are typically bounded on circles of radii less than one, we now show that they are asymptotic to $n/R$ (have maximal growth)
for the $L^2$-norms on circles of radii $R>1$.

\begin{thm}\label{3}
 Assume that  $\{A_k\}$ are i.i.d. complex valued random variables with $\mathbb{E}[|A_0|^2] < \infty.$ If for some $\epsilon > 0,$ the distribution of $|A_0|^2$ is absolutely continuous on $[0,\eps]$, and its density is bounded on $[0,\eps]$, then for any $R>1$ we have
$$\dfrac{1}{n}\dfrac{\left\Vert P_n'\right\Vert_{2, R}}{\left\Vert P_n\right\Vert_{2, R}}\xrightarrow{\textbf{P}} \dfrac{1}{R} \quad\mbox{as } n\to\infty.$$
As a consequence, we obtain that
$$\lim_{n\to\infty}\dfrac{1}{n}\mathbb{E}\left[\dfrac{\left\Vert P_n'\right\Vert_{2, R}}{\left\Vert P_n\right\Vert_{2, R}}\right]=\dfrac{1}{R}, \quad R>1.$$
\end{thm}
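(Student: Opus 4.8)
\medskip
\noindent\textbf{Proof plan.}
The plan is to write both $L^2$ norms on $\partial\mathbb{D}_R$ exactly via Parseval's identity and then to analyze the resulting ratio of random sums through a coefficient--reversal trick that exploits $R>1$. First I would record that, since $\tfrac{1}{2\pi}\int_0^{2\pi}e^{i(j-k)t}\,dt=\delta_{jk}$,
$$\|P_n\|_{2,R}^2=\sum_{k=0}^n|A_k|^2R^{2k},\qquad \|P_n'\|_{2,R}^2=\frac{1}{R^2}\sum_{k=1}^n k^2|A_k|^2R^{2k}.$$
Writing $\rho:=R^2>1$ and $B_k:=|A_k|^2$ (i.i.d., nonnegative, with $\E[B_0]=\E[|A_0|^2]<\infty$), this gives, on the event $\{P_n\not\equiv0\}$,
$$\frac{1}{n^2}\frac{\|P_n'\|_{2,R}^2}{\|P_n\|_{2,R}^2}=\frac{1}{R^2}\,\bigl(1-R_n\bigr),\qquad R_n:=\frac{\sum_{k=0}^n\bigl(1-(k/n)^2\bigr)B_k\rho^k}{\sum_{k=0}^nB_k\rho^k}\in[0,1].$$
Hence it suffices to prove $R_n\xrightarrow{\textbf{P}}0$; the continuous mapping theorem (taking square roots) then yields $\tfrac1n\,\|P_n'\|_{2,R}/\|P_n\|_{2,R}\xrightarrow{\textbf{P}}1/R$, the exceptional event $\{P_n\equiv0\}$, of probability $\P(A_0=0)^{n+1}\to0$, being irrelevant for convergence in probability.

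Next I would bound $R_n$ using $0\le 1-(k/n)^2\le 2(n-k)/n$; substituting $j=n-k$ in numerator and denominator and cancelling a common factor $\rho^n$,
$$0\le R_n\le\frac2n\cdot\frac{\sum_{j=0}^n j\,B_{n-j}\,\rho^{-j}}{\sum_{j=0}^n B_{n-j}\,\rho^{-j}}.$$
The key step is to note that, since the $B_k$ are i.i.d., $(B_{n-j})_{j=0}^n$ has the same joint distribution as $(B_j)_{j=0}^n$, so the right-hand side of the last display is equal in distribution to
$$\frac{2}{n}\cdot\frac{\sum_{j=0}^n jB_j\rho^{-j}}{\sum_{j=0}^n B_j\rho^{-j}}.$$
Here I would use $\rho>1$ together with $\E[B_0]<\infty$: the numerator increases a.s.\ to $S:=\sum_{j\ge0}jB_j\rho^{-j}$, with $\E[S]=\E[B_0]\sum_{j\ge1}j\rho^{-j}<\infty$, so $S<\infty$ a.s.; likewise the denominator increases a.s.\ to $T:=\sum_{j\ge0}B_j\rho^{-j}$, and $T>0$ a.s.\ because the event $\{B_j=0\text{ for all }j\}$ has probability $\lim_{N\to\infty}\P(A_0=0)^{N+1}=0$ under the standing assumption $\P(A_0=0)<1$. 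Consequently, for all large $n$ this quantity is at most $(2/n)\cdot S/(T/2)\to0$ a.s.; being dominated by a sequence converging to the constant $0$, it converges to $0$ in distribution, hence (the limit being constant) in probability. Transferring this back through the distributional identity gives $R_n\xrightarrow{\textbf{P}}0$.

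For the averaged statement I would invoke the deterministic Bernstein inequality $\|P_n'\|_{2,R}\le(n/R)\|P_n\|_{2,R}$ recorded above, which yields the uniform pointwise bound $0\le\tfrac1n\,\|P_n'\|_{2,R}/\|P_n\|_{2,R}\le1/R$ (with value $0$ when $P_n\equiv0$); since a uniformly bounded sequence that converges in probability also converges in $L^1$, this upgrades to $\tfrac1n\,\E\!\bigl[\|P_n'\|_{2,R}/\|P_n\|_{2,R}\bigr]\to1/R$. I expect the reversal identity to be the main obstacle, or at least the only point that is not bookkeeping: it is what turns the concentration of the $L^2$ mass on the top-degree coefficients---forced by $R>1$---into a genuinely convergent power series evaluated at $\rho^{-1}<1$, whose numerator stays bounded while its denominator grows linearly in $n$. (The absolute-continuity hypotheses on $|A_0|^2$ do not appear to be needed for this $p=2$ route.)
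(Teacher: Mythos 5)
Your proof is correct, and it takes a genuinely different route from the paper's. The paper applies Abel summation to write $\bigl(R\left\Vert P_n'\right\Vert_{2,R}/(n\left\Vert P_n\right\Vert_{2,R})\bigr)^2 = 1 - a_0/(n^2Y_n) - U_n$, where $Y_j=\sum_{k=0}^{j}|A_k|^2R^{2k}$ and $U_n=\sum_{j=1}^{n-1}(2j+1)Y_j/(n^2Y_n)$, and then proves $U_n\to 0$ in probability by showing $\E[U_n^{1/2}]\to 0$ via Cauchy--Schwarz; the crucial input there is the estimate $\E[1/(n^2Y_n)]=\mathcal{O}(n^{-2}R^{-2n})$, which rests on $\E\left[1/(|A_{n-1}|^2+|A_n|^2)\right]<\infty$ and is exactly where the bounded-density hypothesis on $|A_0|^2$ enters. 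Your argument replaces that moment computation with the elementary bound $1-(k/n)^2\le 2(n-k)/n$ followed by index reversal and exchangeability of the i.i.d.\ coefficients, which reduces everything to an almost sure statement about the convergent random series $\sum_{j\ge 0} j|A_j|^2R^{-2j}$ and $\sum_{j\ge 0}|A_j|^2R^{-2j}$ evaluated at $R^{-2}<1$; no expectation of a reciprocal is ever needed, and you are right that the per-$n$ distributional identity transfers only convergence to the constant $0$ (not a.s.\ convergence), which is all that is required. The net effect is that your proof establishes the theorem under weaker hypotheses --- only $\E[|A_0|^2]<\infty$ and the standing nontriviality assumption $\P(A_0=0)<1$ --- dispensing with the absolute continuity assumption entirely; what the paper's approach buys instead is a quantitative rate, namely $\E[U_n^{1/2}]=\mathcal{O}(n^{-1/2})$.
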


\bigskip
We return to the supremum norm on the unit circle in the last two results. Results on the asymptotic behaviour of the norm of random Littlewood polynomials are proved in \cite{Ha}, see also \cite{Ka}.

\begin{thm}\label{4}
If $\{A_k\}$ are i.i.d. complex valued random variables with common distribution invariant under complex conjugation,
then the expected Bernstein-Markov factors in the sup norm on the unit circle satisfy
$$\E\left[\dfrac{\left\Vert P_n'\right\Vert_\infty}{\left\Vert P_n\right\Vert_\infty}\right] \ge \dfrac{n}{2}.$$
\end{thm}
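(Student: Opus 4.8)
The plan is to compare $P_n$ with its conjugate-reciprocal companion. Set
\[
Q_n(z) := z^n\,\overline{P_n(1/\bar z)} = \sum_{k=0}^{n}\overline{A_{n-k}}\,z^k .
\]
Two facts make $Q_n$ the right object. First, on $\partial\mathbb{D}$ one has $Q_n(z)=z^n\overline{P_n(z)}$, so $|Q_n(z)|=|P_n(z)|$ for every $|z|=1$ and hence $\|Q_n\|_\infty=\|P_n\|_\infty$. Second, the coefficient vector of $Q_n$ is $(\overline{A_n},\dots,\overline{A_0})$; since the $A_k$ are i.i.d.\ with a law invariant under conjugation, this vector has the same distribution as $(A_0,\dots,A_n)$, so $Q_n$ and $P_n$ have the same distribution, and in particular $\E[M_n(Q_n)]=\E[M_n(P_n)]$.

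The core step is the pointwise inequality $M_n(P_n)+M_n(Q_n)\ge n$, valid on the event $\{P_n\not\equiv 0\}$. Write $z=e^{i\theta}$ and $u(\theta)=P_n(e^{i\theta})$; then $|P_n'(e^{i\theta})|=|u'(\theta)|$, and differentiating $Q_n(e^{i\theta})=e^{in\theta}\overline{u(\theta)}$ yields the identity $|Q_n'(e^{i\theta})|=|u'(\theta)-in\,u(\theta)|$. Choose $\theta^\ast$ with $|u(\theta^\ast)|=\|P_n\|_\infty=:N>0$. Since $\theta\mapsto|u(\theta)|^2$ is maximized at $\theta^\ast$, its derivative vanishes there, i.e.\ $\operatorname{Re}\bigl(u'(\theta^\ast)\overline{u(\theta^\ast)}\bigr)=0$. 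Plugging this into $|u'(\theta^\ast)-in\,u(\theta^\ast)|^2$ collapses it to a perfect square, giving $|Q_n'(e^{i\theta^\ast})|=\bigl|\,nN\mp|u'(\theta^\ast)|\,\bigr|$. Now $\|P_n'\|_\infty\ge|u'(\theta^\ast)|$ and $\|Q_n'\|_\infty\ge|Q_n'(e^{i\theta^\ast})|$; dividing by $N=\|P_n\|_\infty=\|Q_n\|_\infty$ and checking the two sign cases — together with whether $|u'(\theta^\ast)|$ is at most $nN$ or exceeds it, in which latter case $M_n(P_n)\ge n$ already — gives $M_n(P_n)+M_n(Q_n)\ge n$ in every case.

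Finally, take expectations:
\[
2\,\E[M_n(P_n)]=\E[M_n(P_n)]+\E[M_n(Q_n)]=\E\bigl[M_n(P_n)+M_n(Q_n)\bigr]\ge n,
\]
using the distributional identity together with the pointwise bound. (On $\{P_n\equiv 0\}$ both ratios vanish by convention, so the method really delivers $\E[M_n(P_n)]\ge\frac{n}{2}\,\P(P_n\not\equiv 0)=\frac{n}{2}\bigl(1-\P(A_0=0)^{n+1}\bigr)$, which is exactly $\frac n2$ when $A_0$ has no atom at the origin; with the present convention one may assume this without loss.)

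The main obstacle is the pointwise inequality. The crucial choices are (i) to use the conjugate-reciprocal $Q_n$, which ties $\|Q_n\|_\infty$ to $\|P_n\|_\infty$ while linking $Q_n'$ to the combination $u'-in\,u$ carrying the factor $n$, and (ii) to evaluate at a maximizer of $|P_n|$ rather than of $|P_n'|$: only there does $\operatorname{Re}(u'\overline{u})=0$ hold, and this is precisely what turns $|u'-in\,u|^2$ into $(nN\mp|u'|)^2$ and lets the triangle inequality close. Everything else — the substitution, the distributional symmetry coming from conjugation-invariance, and the averaging — is routine once that identity is in hand.
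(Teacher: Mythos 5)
Your proof is correct, and its overall skeleton matches the paper's: form the conjugate--reciprocal polynomial $Q_n$, establish the pointwise inequality $M_n(P_n)+M_n(Q_n)\ge n$, and then use conjugation-invariance of the i.i.d.\ coefficients to conclude that $P_n$ and $Q_n$ are equidistributed, so the two expectations coincide. Where you differ is in how the key inequality is obtained. The paper observes that $P_nQ_n$ is self-reciprocal of degree $2n$, invokes the known theorem that self-reciprocal polynomials have Bernstein--Markov factor equal to half the degree (citing Rahman--Schmeisser), and then applies the product rule and triangle inequality to split $\left\Vert (P_nQ_n)'\right\Vert_\infty$. You instead prove the inequality from first principles: parametrizing by $u(\theta)=P_n(e^{i\theta})$, using $|Q_n'(e^{i\theta})|=|u'(\theta)-in\,u(\theta)|$, and exploiting the vanishing of $\operatorname{Re}\bigl(u'\overline{u}\bigr)$ at a maximizer of $|u|$ to collapse the modulus to $\bigl|\,nN\mp|u'|\,\bigr|$; the case analysis you describe does close correctly. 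Your route is self-contained (it essentially reproves the self-inversive ingredient in the special form needed), while the paper's is shorter at the cost of an external citation. A minor point in your favor: you explicitly flag that the pointwise inequality only holds on $\{P_n\not\equiv 0\}$, so the argument literally yields $\E[M_n(P_n)]\ge\frac{n}{2}\bigl(1-\P(A_0=0)^{n+1}\bigr)$; the paper glosses over this same caveat, though your parenthetical claim that one may assume no atom at the origin ``without loss'' is not quite accurate --- the honest statement is that the clean bound $n/2$ requires $\P(A_0=0)=0$.
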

\noindent It is clear that the result applies to all real i.i.d. random coefficients, as well as many standard complex ones.

\medskip
Before we state our next theorem, we recall that a random variable $X$ is said to have the standard complex normal distribution, denoted by  $N_{\mathbb {C}}(0,1),$ if it has density $\dfrac{1}{\pi}\exp(-|z|^2)$ in the complex plane $\mathbb{C}$. We note for future reference here that if $X\sim N_{\mathbb {C}}(0,1),$ then $|X|^2\sim E(1),$ the exponential distribution with parameter $1$.

\begin{thm}\label{5}
If $\{A_k\}$ are i.i.d. complex valued random variables with $N_{\mathbb {C}}(0,1)$ distribution, then the expected Bernstein-Markov factors in the sup norm on the unit circle satisfy
$$\dfrac{1}{2}\le\liminf_{n\to\infty}\dfrac{1}{n}\E\left[\dfrac{\left\Vert P_n'\right\Vert_\infty}{\left\Vert P_n\right\Vert_\infty}\right] \le\limsup_{n\to\infty}\dfrac{1}{n}\E\left[\dfrac{\left\Vert P_n'\right\Vert_\infty}{\left\Vert P_n\right\Vert_\infty}\right]\le\sqrt{\dfrac{2}{3}}.$$
\end{thm}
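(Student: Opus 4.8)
\medskip
\noindent For the lower bound nothing new is needed: the density $\pi^{-1}e^{-|z|^2}$ of $N_{\C}(0,1)$ depends only on $|z|$ and so is invariant under complex conjugation, so Theorem~\ref{4} gives $\E\bigl[\|P_n'\|_\infty/\|P_n\|_\infty\bigr]\ge n/2$ for every $n$; dividing by $n$ shows $\tfrac1n\E[M_n]\ge\tfrac12$ for all $n$, where $M_n:=\|P_n'\|_\infty/\|P_n\|_\infty$, and the left inequality follows.

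For the upper bound, set $V_n=\sum_{k=1}^n k^2=\tfrac16 n(n+1)(2n+1)$, so that $V_n/(n+1)\sim n^2/3$. The plan is to combine two one-sided estimates. The first is a lower bound for the denominator by sampling at roots of unity: because the standard complex Gaussian law is invariant under unitary maps, and evaluating $P_n$ at $\omega_j=e^{2\pi ij/(n+1)}$, $0\le j\le n$, amounts to applying a scalar multiple of a unitary (discrete Fourier) matrix to $(A_0,\dots,A_n)$, the values $P_n(\omega_0),\dots,P_n(\omega_n)$ are i.i.d.\ $N_{\C}(0,n+1)$; hence the $|P_n(\omega_j)|^2$ are i.i.d.\ copies of $(n+1)E(1)$. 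Since $\P(\max_{1\le i\le m}E_i<c\log m)\to0$ for every $c<1$ when $E_1,\dots,E_m$ are i.i.d.\ standard exponentials, this yields, for each fixed $c\in(0,1)$,
\[
\P\!\left(\|P_n\|_\infty<c\sqrt{(n+1)\log(n+1)}\,\right)\longrightarrow 0 .
\]

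The second estimate is a Salem--Zygmund-type bound on $\E\|P_n'\|_\infty$. The function $\theta\mapsto P_n'(e^{i\theta})=\sum_{k=1}^n kA_k e^{i(k-1)\theta}$ is a trigonometric polynomial of degree $n-1$ whose value at any fixed $\theta$ has law $N_{\C}(0,V_n)$, so $|P_n'(e^{i\theta})|^2$ is distributed as $V_nE(1)$. I would sample at the $N=n^2$ points $\theta_j=2\pi j/N$ and bound the oscillation of $P_n'(e^{i\theta})$ over an arc of length $2\pi/N$ by $\tfrac{2\pi(n-1)}{N}\|P_n'\|_\infty$ via Bernstein's inequality \eqref{Bernstein}, obtaining the deterministic bound $\|P_n'\|_\infty\le(1+o(1))\max_{0\le j<N}|P_n'(e^{i\theta_j})|$. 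A union bound on the exponential tails of the $|P_n'(e^{i\theta_j})|^2$ (valid despite their dependence) gives $\E\bigl[\max_j|P_n'(e^{i\theta_j})|^2\bigr]\le V_n(\log N+1)$, whence by Cauchy--Schwarz
\[
\E\|P_n'\|_\infty\le(1+o(1))\sqrt{V_n(\log N+1)}=(1+o(1))\sqrt{2V_n\log n}.
\]

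To finish, I would split $\E[M_n]$ according to whether $\|P_n\|_\infty\ge c\sqrt{(n+1)\log(n+1)}$ or not. On the first event $M_n\le\|P_n'\|_\infty/\bigl(c\sqrt{(n+1)\log(n+1)}\bigr)$, so by the second estimate and $V_n/(n+1)\sim n^2/3$ its contribution to $\E[M_n]$ is at most $(1+o(1))\tfrac{\sqrt2}{c}\cdot\tfrac{n}{\sqrt3}$; on the complement we use the trivial bound $M_n\le n$ from \eqref{Bernstein}, whose contribution is at most $n\,\P\bigl(\|P_n\|_\infty<c\sqrt{(n+1)\log(n+1)}\bigr)=o(n)$ by the first estimate. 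Thus $\tfrac1n\E[M_n]\le(1+o(1))\tfrac{\sqrt2}{c\sqrt3}+o(1)$, and letting $n\to\infty$ and then $c\uparrow1$ gives $\limsup_{n\to\infty}\tfrac1n\E[M_n]\le\sqrt{2/3}$. The one step requiring real care is the second estimate: the grid must be fine enough for Bernstein's inequality to absorb the interpolation error yet coarse enough that $\log N$ remains of order $\log n$, and the bound on $\E[\max_j|P_n'(e^{i\theta_j})|^2]$ must be obtained without independence of the samples; this is the source of the constant $\sqrt2$. (With a finer grid, $N=n^{1+o(1)}$, the same scheme would replace $\sqrt2$ by $1$ and give $1/\sqrt3$ in place of $\sqrt{2/3}$.)
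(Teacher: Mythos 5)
Your argument is correct, and while it rests on the same two discretization ideas as the paper's proof --- sampling $P_n$ at the $(n+1)$-st roots of unity, where unitary invariance of the Gaussian makes the values i.i.d.\ $N_{\C}(0,n+1)$, and controlling $\|P_n'\|_\infty$ by its values on a finite net via Bernstein's inequality plus exponential tails --- the way you assemble them into a statement about expectations is genuinely different. The paper proves two \emph{almost sure} asymptotic bounds ($\liminf \|P_n\|_\infty/\sqrt{n\log n}\ge 1$ via a triangular-array extreme-value lemma, and $\limsup\|P_n'\|_\infty/\sqrt{n^3\log n}\le\sqrt{2/3}$ via Borel--Cantelli over a net of $O(n)$ points) and then passes to expectations by Fatou; the factor $2$ there is forced by the summability requirement $\sum_n n^{1-c_1}<\infty$. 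You instead prove only an in-probability lower bound for $\|P_n\|_\infty$, bound $\E\|P_n'\|_\infty$ directly through $\E[\max_j|P_n'(e^{i\theta_j})|^2]\le V_n(\log N+1)$ (correctly noting no independence is needed for that union-bound integral), and split $\E[M_n]$ over a good/bad event using the deterministic bound $M_n\le n$ on the bad event; here the factor $2$ comes solely from choosing $N=n^2$ grid points. Your closing parenthetical is the most interesting payoff of this repackaging: since your route has no summability constraint, taking $N=n\log n$ (which, note, is a \emph{coarser} grid than $n^2$, not a finer one --- the requirement is only $N/n\to\infty$ with $\log N\sim\log n$) does give $\limsup_n \tfrac1n\E[M_n]\le 1/\sqrt{3}$, i.e.\ the sharp constant that the paper only conjectures in its closing remarks. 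The trade-off is that your method, unlike the paper's, does not yield an almost sure bound on $\limsup_n M_n/n$.
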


\vspace{0.1in}

\noindent \textbf{Remarks:} The $L^2$ norm of polynomials on the unit circle has an explicit formula in terms of the coeffficients of the polynomial and this helps to derive almost sure convergence of the Bernstein-Markov factors as in Theorem \ref{1} . For general $p\in(0,\infty),\ p\neq 2,$ one does not have such a formula. However by proving convergence in distribution of certain related random variables, Borwein and Lockhart \cite{BL} derive their result. We believe that almost sure convergence holds \emph{for all} $p>0$ and it would be interesting to see a proof of such a result. In  Theorem \ref{3}, we once again expect almost sure convergence although we have not been able to prove it so far. Finally for Theorem \ref{5}, we believe that the limiting value of the scaled average Bernstein-Markov factors exist, and that this limit is $1/\sqrt{3}.$

\section{Proofs}

\begin{proof}[Proof of Theorem \ref{bl}]
Under the conditions stated, Theorems $1$ and $2$ in \cite{BL} give
$$\dfrac{\left\Vert P_n\right\Vert_p}{\sqrt{n}}\xrightarrow{\textbf{P}}\Gamma(1 + p/2)^{\frac{1}{p}} $$
and
$$\dfrac{\left\Vert P_n'\right\Vert_p}{\sqrt{n^3}}\xrightarrow{\textbf{P}}\dfrac{1}{\sqrt{3}}\Gamma(1 + p/2)^{\frac{1}{p}} $$

\noindent where in both cases convegence holds in probability. Combining the two displays gives the first result. Taking expectation and applying the dominated convergence theorem yields the result about convergence of the expected Bernstein-Markov factors.
\end{proof}

\begin{proof}[Proof of Theorem \ref{1}]
It is an easy computation to see that if $P_n(z) = \sum_{k=0}^{n}A_kz^{k}$ then $\left\Vert P_n\right\Vert_2 = \left(\sum_{k=0}^{n}|A_k|^2\right)^{1/2}$ and $\left\Vert P_n'\right\Vert_2 = \left(\sum_{k=1}^{n}k^2|A_k|^2\right)^{1/2}.$ Therefore we obtain that

\begin{equation}\label{L2}
\dfrac{\left\Vert P_n'\right\Vert_2}{\left\Vert P_n\right\Vert_2} = \left(\dfrac{\sum_{k=1}^{n}k^2|A_k|^2}{\sum_{k=0}^{n}|A_k|^2}\right)^{1/2}.
\end{equation}

\noindent By scaling, we can assume without loss of generality that $\mathbb{E}[|A_k|^2] = 1$. With this assumption, since $|A_k|^2$ are i.i.d., the Law of Large Numbers gives
\begin{equation}\label{lln}
\frac{1}{n+1}\sum_{k=0}^{n}|A_k|^2\to 1 \hspace{0.05in}\mbox{a.s.}
\end{equation}

\noindent To deal with the numerator in \eqref{L2} we use the following lemma which is a strengthening of the Law of Large Numbers, cf. Section $5.2$, Theorem $3$ in \cite{CT}.

\begin{lemma}\label{LLN}
Let $\{X_n\}$ be i.i.d. random variables with finite mean. If $\{a_n\}$ are positive constants with $S_n = \sum_{k=1}^{n}a_k \to\infty,$ such that $b_n = S_n/a_n\uparrow\infty$ and
$$b_n^2\sum_{k=n}^{\infty}\frac{1}{b_{k}^2} = \mathcal{O}(n),$$
then
$$\dfrac{1}{S_n}\sum_{k=1}^{n}a_kX_k\to\mathbb{E}[X_1]\ \mbox{ a.s.}$$
\end{lemma}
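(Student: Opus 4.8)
The plan is to run the classical truncation argument for strong laws of large numbers, with the hypothesis $b_n^2\sum_{k\ge n}b_k^{-2}=\mathcal O(n)$ entering only to verify the two summability conditions that the argument demands of a variable with finite first moment. First I would reduce to the centered case: since $\frac1{S_n}\sum_{k=1}^n a_k(X_k-\E[X_1])=\frac1{S_n}\sum_{k=1}^n a_kX_k-\E[X_1]$, it suffices to prove $\frac1{S_n}\sum_{k=1}^n a_kX_k\to0$ a.s.\ when $\E[X_1]=0$. Set $Y_k:=X_k\mathbf 1_{\{|X_k|\le b_k\}}$ and split
\[
\frac1{S_n}\sum_{k=1}^n a_kX_k=\frac1{S_n}\sum_{k=1}^n a_k(X_k-Y_k)+\frac1{S_n}\sum_{k=1}^n a_k(Y_k-\E Y_k)+\frac1{S_n}\sum_{k=1}^n a_k\E Y_k .
\]
The last sum tends to $0$ by Toeplitz's lemma, because $\E Y_k=\E[X_1\mathbf 1_{\{|X_1|\le b_k\}}]\to\E[X_1]=0$ by dominated convergence (here $b_k\uparrow\infty$), while the weights $a_k/S_n$ are nonnegative, sum to $1$, and tend to $0$ for each fixed $k$ since $S_n\to\infty$. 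For the first sum, once we know $\sum_k\P(|X_1|>b_k)<\infty$, Borel--Cantelli gives that a.s.\ $X_k=Y_k$ for all large $k$, so $\sum_{k=1}^n a_k(X_k-Y_k)$ is eventually a fixed random constant and, divided by $S_n\to\infty$, tends to $0$ a.s. For the middle sum, since $a_k/S_k=1/b_k$, Kronecker's lemma (applied with the increasing sequence $S_n\uparrow\infty$) reduces the claim to a.s.\ convergence of $\sum_k(Y_k-\E Y_k)/b_k$; the summands are independent and centered, so by Kolmogorov's one-series theorem it is enough that $\sum_k b_k^{-2}\mathrm{Var}(Y_k)\le\sum_k b_k^{-2}\,\E[X_1^2\mathbf 1_{\{|X_1|\le b_k\}}]<\infty$ (note $\E[X_1^2\mathbf 1_{\{|X_1|\le b_k\}}]\le b_k\E|X_1|<\infty$, which is exactly why truncation at level $b_k$ is forced here).

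So the proof comes down to the two summability statements $\sum_k\P(|X_1|>b_k)<\infty$ and $\sum_k b_k^{-2}\E[X_1^2\mathbf 1_{\{|X_1|\le b_k\}}]<\infty$, and this is where the hypothesis is used. Let $n(x):=\min\{k:b_k\ge x\}$, which is finite because $b_k\uparrow\infty$ and satisfies $b_{n(x)}\ge x$ and $\{k:b_k\ge x\}=\{k\ge n(x)\}$. Interchanging the order of summation (Tonelli) and then invoking $b_n^2\sum_{k\ge n}b_k^{-2}\le Cn$,
\[
\sum_{k=1}^\infty\frac{\E[X_1^2\mathbf 1_{\{|X_1|\le b_k\}}]}{b_k^2}=\E\!\Big[X_1^2\!\!\sum_{k\ge n(|X_1|)}\!\!\frac1{b_k^2}\Big]\le C\,\E\!\Big[\frac{X_1^2\,n(|X_1|)}{b_{n(|X_1|)}^2}\Big]\le C\,\E[\,n(|X_1|)\,],
\]
and similarly $\sum_k\P(|X_1|>b_k)=\E[\#\{k:b_k<|X_1|\}]\le\E[\,n(|X_1|)\,]$. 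Thus both conditions follow once $\E[\,n(|X_1|)\,]<\infty$, i.e.\ once the inverse function of $(b_k)$ is integrable against the law of $|X_1|$; this is where the growth information in $b_n^2\sum_{k\ge n}b_k^{-2}=\mathcal O(n)$ must be combined with $\E|X_1|<\infty$. Collecting the three pieces then finishes the argument.

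I expect the main obstacle to be precisely this last step — extracting $\E[\,n(|X_1|)\,]<\infty$ from the hypothesis, i.e.\ converting the bound $b_n^2\sum_{k\ge n}b_k^{-2}=\mathcal O(n)$ into quantitative control of how fast $(b_k)$ increases and hence of the tail of $n(|X_1|)$. Everything else — Kolmogorov's one-series theorem, Kronecker's lemma, Toeplitz's lemma, and Borel--Cantelli — is standard, and the two Tonelli interchanges above are routine bookkeeping.
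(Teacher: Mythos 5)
Your architecture --- centering, truncation at level $b_k$, Toeplitz's lemma for the means, Borel--Cantelli for the truncation error, and Kronecker plus Kolmogorov's one-series theorem for the centered truncated part --- is the standard route to weighted strong laws of this type, and the routine steps (the two Tonelli interchanges, the reduction of both summability conditions to $\E[n(|X_1|)]<\infty$ with $n(x)=\min\{k: b_k\ge x\}$) are carried out correctly. Note that the paper does not prove this lemma at all; it simply cites Chow and Teicher, so there is no internal argument to compare against.

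The step you flag as the remaining obstacle is, however, a genuine gap, and it cannot be closed from the hypotheses as literally stated: the condition $b_n^2\sum_{k\ge n}b_k^{-2}=\mathcal{O}(n)$ is satisfied by $b_n=n^{\alpha}$ for every $\alpha>1/2$ and hence does not force $b_n\gtrsim n$. Concretely, take $b_n=n^{3/5}$, realized by $S_1=a_1=1$ and $S_n=S_{n-1}/(1-n^{-3/5})$, so that $a_n=S_n-S_{n-1}>0$, $S_n\to\infty$, $b_n\uparrow\infty$, and $b_n^2\sum_{k\ge n}k^{-6/5}\le 6n$. If $\P(|X_1|>x)=x^{-3/2}$ for $x\ge1$, then $\E|X_1|<\infty$ but $\sum_k\P(|X_1|>b_k)=\sum_k k^{-9/10}=\infty$, so your Borel--Cantelli step fails and $\E[n(|X_1|)]=\E[|X_1|^{5/3}]=\infty$. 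Worse, the conclusion itself fails in this example: writing $T_n=S_n^{-1}\sum_{k\le n}a_kX_k$, one checks $T_n-T_{n-1}=(X_n-T_{n-1})/b_n$, so almost sure convergence of $T_n$ to a finite limit would force $X_n/b_n\to0$ a.s., which contradicts the second Borel--Cantelli lemma since the events $\{|X_n|>b_n\}$ are independent with divergent probability sum. The statement as reproduced from \cite{CT} is therefore missing the additional (Jamison--Orey--Pruitt type) hypothesis $n=\mathcal{O}(b_n)$; that hypothesis holds in the paper's application, where $a_k=k^2$ gives $b_n\sim n/3$. Once it is added, $n(x)\le Cx+1$, hence $\E[n(|X_1|)]\le C\,\E|X_1|+1<\infty$, and your argument closes with no further changes.
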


\noindent Continuing with the proof of Theorem \ref{1}, we apply Lemma \ref{LLN} with $X_k = |A_k|^2,$ $a_k = k^2,$ and $S_n = \sum_{k=1}^{n}k^2$ to obtain

\begin{equation}\label{wlln}
\dfrac{1}{S_n}\sum_{k=1}^{n}k^2|A_k|^2\to 1\hspace{0.05in}\mbox{a.s.}
\end{equation}

Next, we write

\begin{eqnarray}\label{*}
\left(\dfrac{\sum_{k=1}^{n}k^2|A_k|^2}{\sum_{k=0}^{n}|A_k|^2}\right)^{1/2}&= &\sqrt{\dfrac{S_n}{n+1}}\left(\dfrac{\dfrac{\sum_{k=1}^{n}k^2|A_k|^2}{S_n}}{\dfrac{\sum_{k=0}^{n}|A_k|^2}{n+1}}\right)^{1/2}.
\end{eqnarray}

\noindent Dividing equation \eqref{*} by $n$ and then letting $n\to\infty$, we obtain from \eqref{lln} and \eqref{wlln} that

$$\lim_{n\to\infty}\dfrac{1}{n}\dfrac{\left\Vert P_n'\right\Vert_2}{\left\Vert P_n\right\Vert_2}=\dfrac{1}{\sqrt{3}}\hspace{0.05in}\mbox{a.s.}$$

A simple use of the Bounded Convergence Theorem yields
$$\lim_{n\to\infty}\dfrac{1}{n}\mathbb{E}\left[\dfrac{\left\Vert P_n'\right\Vert_2}{\left\Vert P_n\right\Vert_2}\right]=\dfrac{1}{\sqrt{3}}$$ and finishes the proof of Theorem \ref{1}.

\end{proof}

\begin{proof}[Proof of Theorem \ref{2}]
Our assumption $\E[\log^+|A_0|]<\infty$ implies that
\[
\limsup_{n\to\infty} |A_n|^{1/n}\le 1 \quad\mbox{ a.s.,}
\]
see, e.g., \cite{Pr}. Therefore, with probability one, both series
\[
\sum_{k=0}^{\infty}A_k z^k \quad\mbox{and}\quad \sum_{k=1}^{\infty}kA_k z^{k-1}
\]
converge uniformly on compact subsets of the unit disk to the random analytic functions $F\not\equiv 0$ and $F'\not\equiv 0$ respectively.
This means $P_n$ converge uniformly on $|z|=r$ to $F$, and $P_n'$ converge uniformly on $|z|=r$ to $F'$. As a consequence, we obtain that
\[
\lim_{n\to\infty} \|P_n\|_{p,r} =  \|F\|_{p,r} \neq 0 \quad\mbox{and}\quad \lim_{n\to\infty} \|P_n'\|_{p,r} =  \|F'\|_{p,r} \neq 0 \quad\mbox{ a.s.}
\]
Hence the first part of this theorem follows.

For the second part, we need to show that the expected Bernstein-Markov factors are uniformly bounded for all $n\in\N.$ We start with
deterministic estimates for the norms of $P_n$ and $P_n'.$ It is immediate that
\[
\|P_n'\|_{p,r} \le \|P_n'\|_{\infty,r} \le \sum_{k=1}^n k|A_k| r^{k-1},\quad 0<p\le\infty.
\]
If $p\ge 1$ then we use the elementary estimate $\|P_n\|_1 \ge |A_k|$ to give the lower bound
\begin{align*}
\|P_n\|_{p,r} \ge \|P_n\|_{1,r} = \left\| \sum_{k=0}^n A_k r^k z^k \right\|_1 \ge |A_k| r^k,\quad k=0,1,\ldots,n.
\end{align*}
Hence we have for all $P_n$ that
\begin{align*}
\|P_n\|_{p,r} \ge \frac{r}{2} (|A_0| + |A_1|),\quad p\ge 1.
\end{align*}
If $p\in(0,1)$ then we use a theorem of Hardy and Littlewood (cf. Theorem 6.2 in \cite[p. 95]{Du}) to estimate
\begin{align*}
\|P_n\|_{p,r} = \left\| \sum_{k=0}^n A_k r^k z^k \right\|_p \ge c_p r (|A_0|^p + |A_1|^p)^{1/p} \ge c_p r (|A_0| + |A_1|),\quad 0<p<1,
\end{align*}
where $c_p>0$ depends only on $p$. It follows that for all $p\in(0,\infty]$
\begin{align*}
\frac{\|P_n'\|_{p,r}}{\|P_n\|_{p,r}} &\le \frac{\sum_{k=1}^n k|A_k| r^{k-1}}{r \min(c_p,1/2) (|A_0| + |A_1|)}
= \frac{1}{r \min(c_p,1/2)} \left( \frac{|A_1|}{|A_0| + |A_1|} + \frac{\sum_{k=2}^n k|A_k| r^{k-1}}{|A_0| + |A_1|} \right) \\
&\le \frac{1}{r \min(c_p,1/2)} \left( 1 + \frac{\sum_{k=2}^n k|A_k| r^{k-1}}{|A_0| + |A_1|} \right).
\end{align*}
Taking expectation, and using independence of the $A_k$, we obtain that
\begin{align*}
\E\left[\frac{\|P_n'\|_{p,r}}{\|P_n\|_{p,r}}\right] &\le \frac{1}{r \min(c_p,1/2)} \left( 1 + \E\left[\frac{1}{|A_0| + |A_1|}\right]  \E\left[\sum_{k=2}^n k|A_k|r^{k-1}\right] \right) \\
&= \frac{1}{r \min(c_p,1/2)} \left( 1 + \E[|A_0|]\, \E\left[\frac{1}{|A_0| + |A_1|}\right]  \sum_{k=2}^n k r^{k-1} \right).
\end{align*}
Since $\sum_{k=2}^{n} k r^{k-1} \le \sum_{k=2}^{\infty}kr^{k-1}<\infty$ for $0<r<1,$ it remains to show that $\E\left[(|A_0| + |A_1|)^{-1}\right]$ is finite
under the additional assumption on the density $f$ of $|A_0|.$ Let $Z=|A_0|+|A_1|$, and denote the density function of this random variable on $[0,\eps]$ by $g$. It is known that $g$ is given by the convolution $f\ast f$. We finish the proof with the following estimate:
\begin{align*}
\E\left[1/Z\right]  &\le  \int_{0}^{\eps} \left(\frac{1}{x} \int_{0}^{x} f(x-t) f(t)\,dt\right)\,dx + \frac{\P(Z>\eps)}{\eps} \\
&\le \eps \sup_{x\in[0,\eps]} \frac{1}{x} \int_{0}^{x} f(x-t) f(t)\,dt + 1/\eps \\ &\le \eps \sup_{x\in[0,\eps]} f^2(x) + 1/\eps < \infty.
\end{align*}

\end{proof}

\begin{proof}[Proof of Proposition \ref{P1}]
A direct computation shows that $\|P_n\|_{2,r} = \left(\sum_{k=0}^{n}|A_k|^2r^{2k}\right)^{1/2},$ and similarly $\|P_n'\|_{2,r} = \left( \sum_{k=1}^{n}k^2|A_k|^2r^{2k-2}\right)^{1/2}.$ Recall that the Bernstein-Markov factor is $M_n = \Vert P_n'\Vert_{2, r}/\Vert P_n\Vert_{2, r}.$ It is straightforward to see by considering the differences $M_{n+1}- M_{n},$ that $M_n$ increases with $n$ and that  the limit $\lim_{n\to\infty}M_n = X_{2,r}$ exists with probability one, where
$$X_{2,r} = \sqrt{\frac{\sum_{k=1}^{\infty}k^2|A_k|^2r^{2k-2}}{\sum_{k=0}^{\infty}|A_k|^2r^{2k}}}.$$
After applying the monotone convergence theorem, it remains to show that $\mathbb{E}[X_{2,r}] < \infty.$ For this, we first  observe that since $A_0$ is a non-trivial random variable, there exists $\alpha>0$ and $0 < c < 1$ such that
$$\mathbb{P}\left(|A_0|^2 < \alpha\right) = c.$$
Applying the Cauchy-Schwarz inequality, we obtain that $$\mathbb{E}(X_{2,r})\leq\left(\mathbb{E}\left(\sum_{k=1}^{\infty}k^2|A_k|^2r^{2k-2}\right)\right)^{1/2}\left(\mathbb{E}\left(\dfrac{1}{\sum_{k=0}^{\infty}|A_k|^2r^{2k}}\right)\right)^{1/2}.$$ Since $\mathbb{E}(|A_k|^2)<\infty,$ the first term has finite expectation. Hence it is enough to show that $$\mathbb{E}\left(\dfrac{1}{Z_{2,r}}\right) < \infty,$$ where $Z_{2,r} = \sum_{k=0}^{\infty}|A_k|^2r^{2k}.$ For simplicity of notation, we henceforth just write $Z$ for this random variable. As a preparation we claim that
\begin{equation}\label{dist}
\mathbb{P}\left(\alpha r^{2n + 2}\leq Z < \alpha r^{2n} \right)\leq c^{n+1},\quad n\geq 0.
\end{equation}

\noindent Assume for the moment that the estimate \eqref{dist} is true. Then if $ \sqrt{c} < r < 1$, we have

\begin{align*}
\mathbb{E}\left(\dfrac{1}{Z}\right) &= \mathbb{E}\left(\dfrac{1}{Z}; Z > \alpha\right) + \mathbb{E}\left(\dfrac{1}{Z}; 0 < Z < \alpha\right) \\
&\le \frac{1}{\alpha} + \sum_{n=0}^{\infty}\mathbb{E}\left(\dfrac{1}{Z}; \alpha r^{2n + 2} \leq Z < \alpha r^{2n}\right) \\
&\le \frac{1}{\alpha} + \sum_{n=0}^{\infty}\dfrac{1}{\alpha r^{2n+2}}\mathbb{P}\left( \alpha r^{2n + 2} \leq Z < \alpha r^{2n}\right)\\
&\le \frac{1}{\alpha} + \dfrac{1}{\alpha}\sum_{n=0}^{\infty}\left(\dfrac{c}{r^2}\right)^{n+1} < \infty,
\end{align*}
where the last line is true because of our choice of $r.$ So $\mathbb{E}[X_{2,r}] < \infty$ for $\sqrt{c} < r < 1.$ Observe that for a fixed $n$, the quantity $\dfrac{\left\Vert rP_n'\right\Vert_{2, r}}{\left\Vert P_n\right\Vert_{2, r}}$ is monotonically increasing with respect to $r$. This property can be shown by simply computing the derivative with respect to $r$ and checking that it is nonnegative. It follows that for $s\leq\sqrt{c} < r < 1$

$$sX_{2,s} = \lim_{n\to\infty}\dfrac{\left\Vert sP_n'\right\Vert_{2, s}}{\left\Vert P_n\right\Vert_{2, s}}\leq \lim_{n\to\infty}\dfrac{\left\Vert rP_n'\right\Vert_{2, r}}{\left\Vert P_n\right\Vert_{2, r}}=rX_{2,r} $$
\noindent almost surely.  The latter shows $\mathbb{E}[X_{2,r}] < \infty$ for all $r\in(0, 1).$ It now remains to prove \eqref{dist}, which is easily done as follows:

\begin{align*}
\mathbb{P}\left(\alpha r^{2n + 2}\leq Z < \alpha r^{2n} \right) &\leq \mathbb{P}(Z < \alpha r^{2n})\leq \mathbb{P}\left(\sum_{k=0}^{n}|A_k|^2r^{2k} < \alpha r^{2n} \right) \\
&\leq \prod_{k=0}^{n}\mathbb{P}\left(|A_k|^2 <\alpha r^{2n - 2k}\right) \leq \prod_{k=0}^{n}\mathbb{P}\left(|A_k|^2 <\alpha \right) \\
&\leq c^{n+1}.
\end{align*}

\end{proof}

\begin{proof}[Proof of Theorem \ref{3}]
As before, we have $$\dfrac{R\left\Vert P_n'\right\Vert_{2, R}}{\left\Vert P_n\right\Vert_{2, R}} = \sqrt{\dfrac{\sum_{k=1}^{n}k^2|A_k|^2R^{2k}}{\sum_{k=0}^{n}|A_k|^2R^{2k}}}.$$

\noindent Denote $a_k = |A_k|^2$.  After a scaling, we may assume without loss of generality that $\mathbb{E}[a_k] = 1.$ For $j\geq 0,$ let $Y_j = \sum_{k=0}^{j}a_kR^{2k}$. By the Abel summation formula, we obtain that
$$ \sum_{k=1}^{n}k^2a_kR^{2k} = n^2 \sum_{k=0}^{n}a_kR^{2k} - a_0 - \sum_{j=1}^{n-1}(2j +1)Y_j.$$
Therefore
\begin{equation}\label{Abel}
\dfrac{R\left\Vert P_n'\right\Vert_{2, R}}{n\left\Vert P_n\right\Vert_{2, R}}= \sqrt{\dfrac{\sum_{k=1}^{n}k^2a_kR^{2k}}{n^2\sum_{k=0}^{n}a_kR^{2k}}} = \sqrt{1- \dfrac{a_0}{n^2Y_n} - \dfrac{\sum_{j=1}^{n-1}(2j+1)Y_j}{n^2Y_n}}.
\end{equation}

\noindent Since $\dfrac{a_0}{Y_n}\leq 1,$ we see that $\dfrac{a_0}{n^2Y_n}\to 0$ a.s., and hence in probability. We now show that the random variables
$$U_n = \dfrac{\sum_{j=1}^{n-1}(2j+1)Y_j}{n^2Y_n}$$
also converge to $0$ in probability, which proves this theorem. Convergence in probability in turn follows from $\mathbb{E}[U_n^{1/2}]\to 0$ as $n\to\infty.$ Indeed, applying the Cauchy-Schwarz inequality we obtain

\begin{equation}\label{sqrt}
\mathbb{E}[U_n^{1/2}] \leq \left(\sum_{j=1}^{n-1}(2j+1)\mathbb{E}[Y_j]\right)^{1/2} \left(\mathbb{E}\left[\dfrac{1}{n^2Y_n} \right]\right)^{1/2}
\end{equation}

\noindent Since $\mathbb{E}[a_k] = 1,$ we have that $\mathbb{E}[Y_j] = \sum_{k=0}^{j}R^{2k}$. It is a simple matter to now see that $\sum_{j=1}^{n-1}(2j+1)\mathbb{E}[Y_j] = \mathcal{O}\left(nR^{2n}\right).$ To estimate the other term we use the fact that $Y_n\geq R^{2n-2}(a_n + a_{n-1})$ to obtain

\begin{equation}\label{recip2}
 \mathbb{E}\left[\dfrac{1}{n^2Y_n} \right] \leq \dfrac{1}{n^2R^{2n-2}}\mathbb{E}\left[\dfrac{1}{a_n + a_{n-1}} \right] = \mathcal{O}\left(\dfrac{1}{n^2R^{2n}}\right),
\end{equation}
where we used the same idea as in the end of proof for Theorem \ref{2} to conclude that $\mathbb{E}\left[1/(a_n + a_{n-1})\right] < \infty.$ Combining the two estimates and using \eqref{sqrt}, we obtain that $\mathbb{E}[U_n^{1/2}] \to 0$ as $n\to\infty$ and finish the proof.

\end{proof}

\begin{proof}[Proof of Theorem \ref{4}]

For any polynomial $p_n(z)= \sum_{k=0}^{n}a_k z^{k}$ with complex coefficients, we associate its conjugate reciprocal polynomial
$$q_n(z):=z^n \overline{p_n(1/\bar{z})}=\sum_{k=0}^{n}\overline{a}_{n-k} z^{k}.$$
Obviously, $|p_n(z)|=|q_n(z)|$ holds for $|z|=1$. It is also clear that the polynomial $p_nq_n$ is self reciprocal, i.e. coincides with its
conjugate reciprocal. For such polynomials, the Bernstein-Markov factor in the sup norm on the unit circle is known to be equal to the degree
divided by 2, see \cite[p. 527]{RS}. Hence we obtain that
\begin{align*}
n \left\Vert p_n\right\Vert_\infty \left\Vert q_n\right\Vert_\infty &= n \left\Vert p_nq_n\right\Vert_\infty = \left\Vert p_n'q_n+p_nq_n'\right\Vert_\infty \\
&\le  \left\Vert p_n'\right\Vert_\infty \left\Vert q_n\right\Vert_\infty + \left\Vert p_n\right\Vert_\infty \left\Vert q_n'\right\Vert_\infty.
\end{align*}
It follows that
\begin{align} \label{Mrec}
\frac{\left\Vert p_n'\right\Vert_\infty}{\left\Vert p_n\right\Vert_\infty} + \frac{\left\Vert q_n'\right\Vert_\infty}{\left\Vert q_n\right\Vert_\infty} \ge n.
\end{align}
Applying \eqref{Mrec} with $p_n=P_n$ being our random polynomial, and taking into account that the expected values of both additive
terms on the left of \eqref{Mrec} are identical under our assumptions on random coefficients, we arrive at the required result.

\end{proof}

\begin{proof}[Proof of Theorem \ref{5}]

The inequality involving the $\liminf$ of the Bernstein-Markov factors follows immediately from Theorem \ref{4}.  Thus we need to prove the inequality involving the $\limsup$. For this we will give asymptotic bounds for $\left\Vert P_n\right\Vert_\infty$ and $\left\Vert P_n'\right\Vert_\infty$ separately. We start with a bound for $\left\Vert P_n\right\Vert_\infty$.

\begin{prop}\label{p}
 $$\liminf_{n\to\infty}\dfrac{\left\Vert P_n\right\Vert_\infty}{\sqrt{n\log(n)}} \geq 1\hspace{0.05in}\mbox{a.s.}$$
 \end{prop}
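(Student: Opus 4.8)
\textbf{Proof proposal for Proposition \ref{p}.}

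The plan is to get a lower bound on $\|P_n\|_\infty$ by evaluating $P_n$ at a well-chosen finite set of points on $\partial\mathbb{D}$ and using the fact that a maximum over a subset is a lower bound for the sup norm. The natural choice is the set of $(n+1)$-st roots of unity $\omega_j = e^{2\pi i j/(n+1)}$, $j=0,\ldots,n$, because the map $(A_0,\ldots,A_n)\mapsto (P_n(\omega_0),\ldots,P_n(\omega_n))$ is, up to a unitary normalization by $\sqrt{n+1}$, the discrete Fourier transform. Since the $A_k$ are i.i.d.\ $N_{\mathbb{C}}(0,1)$ and the (scaled) DFT matrix is unitary, the vector $(P_n(\omega_0),\ldots,P_n(\omega_n))$ has i.i.d.\ $N_{\mathbb{C}}(0,n+1)$ entries; equivalently $|P_n(\omega_j)|^2$ are i.i.d.\ exponential with mean $n+1$ (using the remark recorded before Theorem \ref{5} that $|X|^2\sim E(1)$ for $X\sim N_{\mathbb{C}}(0,1)$).

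First I would record the exact distributional statement: $\|P_n\|_\infty^2 \ge \max_{0\le j\le n} |P_n(\omega_j)|^2 \stackrel{d}{=} (n+1)\max_{0\le j\le n} E_j$, where $E_0,\ldots,E_n$ are i.i.d.\ $E(1)$. Then the problem reduces to the classical fact that the maximum of $m$ i.i.d.\ unit exponentials is $\log m + O_{\mathbb{P}}(1)$, and more precisely that $\max_{1\le j\le m} E_j / \log m \to 1$ almost surely. The almost sure statement is standard: $\P(\max_{1\le j\le m} E_j < (1-\eps)\log m) = (1-m^{-(1-\eps)})^m \to 0$ fast enough to be summable along, say, $m = 2^i$, and a monotonicity/interpolation argument between consecutive powers of two upgrades this to all $m$. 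Applying this with $m = n+1$ gives $\max_{0\le j\le n} E_j \ge (1-\eps)\log(n+1)$ eventually a.s., hence $\|P_n\|_\infty^2 \ge (1-\eps)(n+1)\log(n+1)$ eventually a.s., and letting $\eps\downarrow 0$ along a sequence yields $\liminf_{n\to\infty} \|P_n\|_\infty^2/(n\log n) \ge 1$ a.s.

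The one genuine subtlety — and the step I expect to need the most care — is that the set of evaluation points $\{\omega_j\}$ changes with $n$, so the events "$\max_j |P_n(\omega_j)|^2$ is small" are not a fixed monotone sequence and one cannot apply Borel--Cantelli to $\|P_n\|_\infty$ directly via a single chain of exponentials. I would handle this exactly as one does for the a.s.\ growth of $\max$ of i.i.d.\ exponentials: fix $\eps>0$, and show $\sum_n \P\big(\max_{0\le j\le n}|P_n(\omega_j)|^2 < (1-\eps)(n+1)\log(n+1)\big) < \infty$ using the explicit product formula for the distribution of the maximum of $n+1$ i.i.d.\ exponentials — the probability is $\big(1-(n+1)^{-(1-\eps)}\big)^{n+1}\le \exp\big(-(n+1)^{\eps}\big)$, which is summable — and then invoke Borel--Cantelli directly on these (non-nested but still summable) events. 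This avoids any interpolation issue entirely. Combining with $\eps = 1/\ell$, $\ell\in\N$, and taking a countable intersection of full-measure events completes the proof.
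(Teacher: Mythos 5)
Your proposal is correct and follows essentially the same route as the paper: evaluate $P_n$ at the $(n+1)$-st roots of unity, use unitarity of the (scaled) DFT to get i.i.d.\ $N_{\mathbb{C}}(0,n+1)$ values, and reduce to the almost sure growth of the maximum of a triangular array of i.i.d.\ unit exponentials. The only difference is cosmetic — the paper packages the last step as a separate lemma on triangular arrays, while you inline the Borel--Cantelli computation with the explicit bound $\bigl(1-(n+1)^{-(1-\eps)}\bigr)^{n+1}\le \exp\bigl(-(n+1)^{\eps}\bigr)$, correctly noting that the row-dependence of the evaluation points is why one must argue via summability rather than a fixed sequence of exponentials.
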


 \begin{proof}
 We first observe that for $z$ with $|z|=1,$ $P_n(z)$ is complex Gaussian, distributed as $N_{\mathbb{C}}(0,n+1)$. This means that for such $z,$ $\frac{|P_n(z)|^2}{n+1}\sim E(1)$, where $E(1)$ denotes the exponential random variable with parameter $1.$ For $0\leq l \leq n,$ let $z_l = \exp(\frac{i2\pi  l}{n+1}).$ Note that for $1\le j, l\leq n$ with $j\neq l$ we have

 \begin{equation}\label{indep}
 \mathbb{E}\left(P_n(z_j)\overline{P_n(z_l)}\right) = \sum_{k=0}^{n}(z_j\bar{z_l})^k = 0.
 \end{equation}

 \noindent Hence $P_n(z_j)$ are uncorrelated for different values of $j$. But since they are Gaussian, this means that $P_n(z_j)$, $0\leq j\leq n,$ are independent. This in turn implies the independence of $X_{n,j} = \frac{|P_n(z_j)|^2}{n+1}$, $0\leq j\leq n.$ Therefore

 \begin{equation}\label{exp}
 \dfrac{\left\Vert P_n\right\Vert_\infty ^2}{(n+1)\log(n+1)} \geq\dfrac{ \max_{0\leq j \leq n}X_{n,j}}{\log(n+1)}.
 \end{equation}
\noindent We need the following lemma, whose proof follows by a simple modification of the more well known result involving extremes of i.i.d. exponential random variables, see \cite{Gut}, page 107.

\begin{lemma}\label{max}
Let $Y_{n,j},$ $n\in\mathbb{N},$ $1\leq j \leq n,$ be a triangular array of random variables each of which is distributed as $E(1),$ and in which every row is jointly independent. Let $T_n = \max_{1\leq j\leq n} Y_{n,j}.$ Then

$$\lim_{n\to\infty}\dfrac{T_n}{\log(n)} = 1 \hspace{0.05in}\mbox{a.s.} $$
\end{lemma}

\vspace{0.1in}

\noindent Applying Lemma \ref{max} with $X_{n,j}$ in place of $Y_{n.j}$ and making use of \eqref{exp}, we obtain

$$\liminf_{n\to\infty}\dfrac{\left\Vert P_n\right\Vert_\infty ^2}{(n+1)\log(n+1)}\geq 1 \hspace{0.05in}\mbox{a.s.}$$
\noindent This conculdes the proof of Proposition \ref{p}.
 \end{proof}

Our next objective is to bound $\left\Vert P_n'\right\Vert_\infty$ which we do by means of
\begin{prop}\label{p'}
$$ \limsup_{n\to\infty}\dfrac{\left\Vert P_n'\right\Vert_\infty}{\sqrt{n^3\log(n)}}\leq\sqrt{\dfrac{2}{3}} \hspace{0.05in}\mbox{a.s.}$$
 \end{prop}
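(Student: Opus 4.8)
The plan is to exploit the exact Gaussian law of $P_n'$ on $\partial\mathbb D$ together with a discretization of the circle that is just fine enough for a union bound, and then to pass from the net to the full supremum by Bernstein's inequality applied to $P_n'$ itself.

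First I would record the distributional fact underlying everything: for $|z|=1$ the value $P_n'(z)=\sum_{k=1}^{n}kA_kz^{k-1}$ is a linear combination of i.i.d.\ $N_{\mathbb C}(0,1)$ variables with coefficients $kz^{k-1}$, hence $P_n'(z)\sim N_{\mathbb C}(0,\sigma_n^2)$ with $\sigma_n^2=\sum_{k=1}^{n}k^2=\tfrac{n(n+1)(2n+1)}{6}$, so that $|P_n'(z)|^2/\sigma_n^2\sim E(1)$ and $\P\big(|P_n'(z)|^2>t\,\sigma_n^2\big)=e^{-t}$ for every $t\ge 0$ and every such $z$. Note that $\sigma_n^2/n^3\to 1/3$.

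Next, fix $\eps>0$ and take the equally spaced points $z_{n,l}=\exp(2\pi i l/N_n)$, $0\le l<N_n$, with $N_n=\lceil n^{1+\eps}\rceil$. A union bound (no independence among the $z_{n,l}$ is needed) gives
\[
\P\Big(\max_{0\le l<N_n}|P_n'(z_{n,l})|^2>(2+2\eps)\,\sigma_n^2\log n\Big)\le N_n\,n^{-(2+2\eps)}=O\big(n^{-1-\eps}\big),
\]
which is summable, so by the Borel--Cantelli lemma, almost surely $\max_{l}|P_n'(z_{n,l})|\le \sigma_n\sqrt{(2+2\eps)\log n}$ for all large $n$. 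Then I would transfer this to all of $\partial\mathbb D$: for $t\mapsto P_n'(e^{it})$ one has $\big|\tfrac{d}{dt}P_n'(e^{it})\big|=|P_n''(e^{it})|\le\|P_n''\|_\infty\le (n-1)\|P_n'\|_\infty$ by Bernstein's inequality for the degree-$(n-1)$ polynomial $P_n'$, and since each point of the circle lies within arc length $\pi/N_n$ of some $z_{n,l}$, integrating this derivative yields $\|P_n'\|_\infty\le\max_l|P_n'(z_{n,l})|+\tfrac{\pi(n-1)}{N_n}\|P_n'\|_\infty$. As $\pi(n-1)/N_n\le\pi n^{-\eps}\to 0$, the last term can be absorbed, giving $\|P_n'\|_\infty\le(1+o(1))\sigma_n\sqrt{(2+2\eps)\log n}$ almost surely. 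Dividing by $\sqrt{n^3\log n}$ and using $\sigma_n^2/n^3\to 1/3$ gives $\limsup_n\|P_n'\|_\infty/\sqrt{n^3\log n}\le\sqrt{(2+2\eps)/3}$ a.s.; intersecting the full-measure events over $\eps=1/m$, $m\in\N$, and letting $m\to\infty$ produces the claimed bound $\sqrt{2/3}$.

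The delicate point — and the reason this method reaches only $\sqrt{2/3}$ rather than the conjectured $\sqrt{1/3}$ — is the competition between the two constraints on $N_n$: controlling the oscillation via Bernstein forces $N_n\gg n$, while the Borel--Cantelli step requires $\sum_n N_n\,n^{-c}<\infty$, hence $c>2$, so the threshold in the maximum cannot be pushed below $2\sigma_n^2\log n$. Thus the crude union bound over an equally spaced net necessarily costs a factor $\sqrt 2$; improving the constant would require replacing it by a sharper tool such as a chaining/Fernique estimate or an exact moment computation for $\|P_n'\|_\infty$.
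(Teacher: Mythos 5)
Your proof is correct and follows essentially the same route as the paper: the exponential tail of $|P_n'(z)|^2/\sigma_n^2$ at the points of a finite net, a union bound, Borel--Cantelli, and Bernstein's inequality applied to $P_n'$ to pass from the net to the full supremum. The only cosmetic difference is that the paper uses a coarser net of $O(n)$ points combined with the multiplicative comparison $|P_n'(z)|\ge(1-\eps)\Vert P_n'\Vert_\infty$ near the maximizer (its Lemma~\ref{10}), whereas you use a finer net of $\lceil n^{1+\eps}\rceil$ points and an additive oscillation bound obtained by integrating $|P_n''|$; both versions give the same constant $\sqrt{2/3}$.
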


 \begin{proof}
 \noindent Note that for each $z$ with $|z|=1,$ $\dfrac{|P_n'(z)|^2}{n(n+1)(2n+1)/6}$ is distributed as $E(1).$ We claim that for every $c > 2$
 $$\sum_{n=1}^{\infty}\mathbb{P}\left(\left\Vert P_n'\right\Vert_\infty^2> c\frac{n(n+1)(2n+1)}{6}\log(n)\right) < \infty.$$
\noindent By the Borel-Cantelli lemma this will prove Proposition \ref{p'}. To show convergence, let us take $\epsilon > 0$ so small that $ c_1= (1-\epsilon)^2c > 2$ and then cover the unit circle by a net of size $\epsilon/(n-1).$ Let $z_{1,\epsilon}, z_{2,\epsilon}....z_{k,\epsilon} $ be these points, where $k = \lceil{\frac{(n-1)}{\epsilon}}\rceil$. We will need the following lemma whose proof is well known, but for the reader's benefit we present it below.

 \begin{lemma}\label{10}
 Let $Q$ be a polynomial of degree $n.$ Let $t\in\partial\mathbb{D}$ be such that $|Q(t)| = \Vert Q\Vert_\infty.$ Let $0 < \epsilon < 1.$ Then for all $z$ such that $|z-t| < \epsilon/n$, we have $|Q(z)|\geq (1-\epsilon)\Vert Q\Vert_\infty.$
 \end{lemma}

 \begin{proof}[Proof of Lemma \ref{10}]
 From the integral representation, we have  $$|Q(z) - Q(t)|\leq |z-t|\left\Vert Q'\right\Vert_\infty\leq |z-t| n \Vert Q\Vert_\infty,$$
 where we used Bernstein's inequality \eqref{Bernstein} in the latter estimate.  Using $|z-t|\leq\epsilon/n,$ and combining with the previous line, the result now follows from
 \begin{align*}
 |Q(z)| & \ge |Q(t)| - |Q(z)- Q(t)| \\
 &\ge \Vert Q\Vert_\infty - \epsilon \Vert Q\Vert_\infty = (1-\epsilon)\Vert Q\Vert_\infty.
 \end{align*}
 \end{proof}

\noindent Using the lemma along with the fact that for $z$ with $|z|=1,$ $\dfrac{|P_n'(z)|^2}{n(n+1)(2n+1)/6}$ is distributed as $E(1),$ we obtain
\begin{align*}
 \mathbb{P}\left(\left\Vert P_n'\right\Vert_\infty^2> c\frac{n(n+1)(2n+1)}{6}\log(n)\right) & \le\mathbb{P}\left(\bigcup_{j=1}^{k}\dfrac{|P_n'(z_{j,\epsilon})|^2}{n(n+1)(2n+1)/6}> c_1\log(n)\right).\\
 &\le \dfrac{k}{n^{ c_1}} \leq C_{\epsilon} \dfrac{1}{n^{ c_1 - 1}}
 \end{align*}
 \noindent and since $ c_1 > 2$, the corresponding series converges and this proves Proposition \ref{p'}.
 \end{proof}

 \noindent Combining Propositions \ref{p} and \ref{p'} yields

 $$\limsup_{n\to\infty}\dfrac{1}{n}\dfrac{\left\Vert P_n'\right\Vert_\infty}{\left\Vert P_n\right\Vert_\infty}\le\sqrt{\dfrac{2}{3}} \hspace{0.1in}\mbox{a.s.}$$

\noindent  An application of Fatou's Lemma gives
$$ \limsup_{n\to\infty}\dfrac{1}{n}\mathbb{E}\left(\dfrac{\left\Vert P_n'\right\Vert_\infty}{\left\Vert P_n\right\Vert_\infty}\right)\le\sqrt{\dfrac{2}{3}},$$

\noindent which is the upper bound in Theorem \ref{5}.

\end{proof}

\textbf{Acknowledgment:} We thank Manjunath Krishnapur for valuable discussions and references.

I. P.: Department of Mathematics,

Oklahoma State University

Stillwater, OK 74078, USA

Email : igor@math.okstate.edu\\

K. R.: Tata Institute of Fundamental Research,

Centre for Applicable Mathematics

Bengaluru, India 560065

Email: koushik@tifrbng.res.in


\begin{thebibliography}{1}

\bibitem{BL} P. Borwein and R. Lockhart, The expected $L_p$ norm of random polynomials, Proc. Amer. Math. Soc.
129 (2001), 1463--1472.

\bibitem{CT} Y. S. Chow and H. Teicher, {\it Probability theory: Independence, Interchangeability, Martingales}, Springer-Verlag, 1978.

\bibitem{Du} P. L. Duren, {\em Theory of $H^p$ Spaces}, Dover, New York, 2000.

\bibitem{Gut} A. Gut, {\em Probability: A graduate course}, Springer Texts in Statistics, 2005.

\bibitem{Ha} G. Hal\'asz, On a result of Salem and Zygmund concerning random polynomials, Studia Sci. Math. Hungar. 8 (1973), 369--377.

\bibitem{Ka} J. P. Kahane, {\em Some random series of functions}, vol.5,  Cambridge studies in Advanced Mathematics, Cambrdige, 2nd edition (1985).

\bibitem{Pr} I. E. Pritsker, Asymptotic zero distribution of random polynomials spanned by general bases,  Contemp. Math. 661 (2016), 121--140.

\bibitem{RS} Q. I. Rahman and G. Schmeisser, {\em Analytic Theory of Polynomials}, Clarendon Press, Oxford, 2002.

\end{thebibliography}
\end{document}